\newtheorem{theorem}{Theorem}[section]
\newtheorem{corollary}[theorem]{Corollary}
\newtheorem{remark}[theorem]{Remark}
\newtheorem{claim}[theorem]{Claim}
\newtheorem{proposition}[theorem]{Proposition}
\newtheorem{definition}[theorem]{Definition}
\newtheorem*{theorem*}{Theorem}
\newtheorem*{claim*}{Claim}
\newcommand{\Corr}{\operatorname{{\bf Corr}}}
\newcommand{\out}{\operatorname{out}}
\newcommand{\inn}{\operatorname{in}}
\newcommand{\dom}{\operatorname{dom}}
\newcommand{\E}{\operatorname{E}}
\def\leukfrac#1/#2{\leavevmode
               \kern.1em
                \raise.9ex\hbox{\the\scriptfont0 ${}_#1$}
                \hskip -1pt\kern-.1em
                /\kern-.15em\lower.10ex\hbox{\the\scriptfont0 ${}_#2$}}
\def\diam{\mathop{\operator@font diam}\nolimits}
\title[Approx. Schreier decorations and approx. K\H{o}nig's line coloring Theorem]{Approximate Schreier decorations and approximate K\H{o}nig's line coloring Theorem}
\author[Greb\'{\i}k]{Jan Greb\'{\i}k${}^{1}$}
\address{${}^1$ Mathematics Institute\\
University of Warwick\\
 Coventry \\
 CV4 7AL, United Kingdom}
 \email{jan.grebik@warwick.ac.uk}
\begin{document}

\begin{abstract}
Following recent result of L.~M.~T\' oth [arXiv:1906.03137] we show that every $2\Delta$-regular Borel graph $\mathcal{G}$ with a (not necessarily invariant) Borel probability measure admits approximate Schreier decoration.
In fact, we show that both ingredients from the analogous statements for finite graphs have approximate counterparts in the measurable setting, i.e., approximate K\H{o}nig's line coloring Theorem for Borel graphs without odd cycles and approximate balanced orientation for even degree Borel graphs.
\end{abstract}

\maketitle
\section{Introduction}

It is a standard fact from finite combinatorics, known as Petersen's $2$-factor theorem, that every $2\Delta$-regular finite graph is a Schreier graph of the free group $F_\Delta$ on $\Delta$ generators.
That is, every such graph admits an orientation and a $\Delta$-labeling of edges such that for every $\alpha\in \Delta$ and every vertex there is exactly one out-edge with label $\alpha$ and exactly one in-edge with label $\alpha$.
Such an orientation and labeling is called a \emph{Schreier decoration}.
Note that every Schreier decoration corresponds to an action of the free group $F_{\Delta}$ on the vertex set of the graph.
We refer the reader to the introduction in \cite{Laszlo} for more information about Schreier decorations.

The analogous statement for infinite graphs without any restriction on
definability follows from the axiom of choice. In the measurable setting,
i.e., when the vertex set is endowed with a standard probability (Borel)
structure and we require the orientation and labeling to be measurable, the
full analogue of the statement fails. This follows from the example of
Laczkovich~\cite{Laczkovich} who constructed an acyclic $2$-regular bipartite
graph on the unit interval that is not induced by an action of $\mathbb{Z}$
on any set of full measure. However, T\'oth recently proved \cite{Laszlo} that
if the measure is invariant one can always find a measurable Schreier
decoration on a different graph that has the same local statistics. This can
be stated in a compact form as follows: every $2\Delta$-regular unimodular
random rooted graph has an invariant random Schreier decoration,
see~\cite[Theorem~1]{Laszlo}. An equivalent formulation in a language that is
closer to the one in this paper is as follows,
see~\cite[Corollary~4]{Laszlo}: Every $2\Delta$-regular graphing
$(\mathcal{G},\mu)$ is a local isomorphic copy of some graphing
$(\mathcal{G}',\mu')$ that is induced by a Borel action of $F_\Delta$ that
preserves $\mu'$.

The key steps in the proof of \cite[Theorem~1]{Laszlo} are
\begin{itemize}
	\item [(I)] a consequence of \cite[Theorem~3]{Laszlo}: for every $\Delta$-regular bipartite graphing $(\mathcal{G},\mu)$ and for every $\epsilon>0$ there is a Borel map $c:E\to \Delta$ that is a proper edge coloring on a set of  $\mu$-measure at least $1-\epsilon$,
	\item [(II)] \cite[Theorem~2]{Laszlo}: every $2\Delta$-regular unimodular random rooted graph admits a graphing representation $(\mathcal{G},\mu)$ that admits measurable balanced orientation, where an orientation is \emph{balanced} if the in-degree is equal to out-degree at any vertex.
\end{itemize}

The main purpose of this paper is to provide short and straightforward proofs
of (I) and an approximate version of (II) that hold for any bounded degree
Borel graph with any Borel probability measure. This implies immediately that
every $2\Delta$-regular graphing admits an approximate Schreier decoration.
As a consequence of the ultrapower technique one can get a strengthening of
\cite[Corollary~4]{Laszlo}, and consequently \cite[Theorem~1]{Laszlo}, see
Section~\ref{sec:5}.

Recall that a \emph{Borel graph} is a triple $\mathcal{G}=(V,\mathcal{B},E)$ where $(V,\mathcal{B})$ is a standard Borel space, $(V,E)$ is a graph and $E\subseteq V\times V$ is a Borel symmetric set in the product Borel structure.
We denote as $\Delta(\mathcal{G})$ the maximum degree of $\mathcal{G}$ and we say that $\mathcal{G}$ is of \emph{bounded degree} if $\Delta(\mathcal{G})\in \mathbb{N}$.
An \emph{orientation} $S$ of $\mathcal{G}$ is a Borel set $S\subseteq E$ such that for every $x,y\in V$ that form an edge in $E$ exactly one of $(x,y)$ or $(y,x)$ is in $S$.
We define $\out_S(v)=\{(v,x)\in S\}$, $\inn_S(v)=\{(x,v)\in S\}$ and $\Corr(S)=\{v\in V:|\inn_S(v)|=|\out_S(v)|\}$.

Let $k\in \mathbb{N}$.
A partial Borel map $c;E\to k$ is called a \emph{partial edge coloring} if $\dom(c)\subseteq E$ is a Borel set and $c(x,y)=c(y,x)$ whenever $(x,y)\in \dom(c)$.
We say that $c$ is \emph{proper} if $c(x,y)\not =c(x,z)$ for every $(x,y)\not=(x,z)\in \dom(c)$.
For a partial edge coloring $c$ we define $x\in \Corr(c)$ if $(x,y)\in\dom(c)$ for every $(x,y)\in E$ and $c(x,y)\not =c(x,z)$ for every $(x,y)\not=(x,z)\in \dom(c)$.
In another words, $x\in \Corr(c)$ if edges adjacent to $x$ are in $\dom(c)$ and the restriction of $c$ to these edges is a proper edge coloring.
When $\dom(c)=E$, then we say that $c$ is an \emph{edge coloring}.

A pair $(S,c)$, where $S$ is an orientation and $c;E\to k$ is a partial edge coloring, is called a \emph{partial Schreier decoration}.
Define $v\in \Corr(S,c)$ if $v\in \Corr(S)$ and $c$ is injective when restricted to both $\inn_S(v)$ and $\out_S(v)$.
If $\mathcal{G}$ is $2\Delta$-regular, then we say that $(S,c)$ is a \emph{Schreier decoration} if $c:E\to \Delta$ and $\Corr(S,c)=V$.

With this notation it is easy to see that $S$ is a \emph{balanced orientation} if $\Corr(S)=V$ and $c:E\to k$ is a proper edge coloring if $\Corr(c)=V$.

\begin{definition}
Let $\mathcal{G}$ be a Borel graph of bounded degree.
The \emph{approximate chromatic index} of $\mathcal{G}$, in symbols $\chi'_{App}(\mathcal{G})$, is defined as the minimal $k\in \mathbb{N}$ such that for every Borel probability measure $\mu$ and every $\epsilon>0$ there is an edge coloring $c:E\to k$ such that
$$\mu(\Corr(c))>1-\epsilon.$$

We say that $\mathcal{G}$ admits \emph{approximate balanced orientation} if for every Borel probability measure $\mu$ and every $\epsilon>0$ there is an orientation $S$ of $\mathcal{G}$ such that
$$\mu(\Corr(S))>1-\epsilon.$$

If $\mathcal{G}$ is $2\Delta$-regular, then we say that $\mathcal{G}$ admits \emph{approximate Schreier decoration} if for every Borel probability measure $\mu$ and every $\epsilon>0$ there is a partial Schreier decoration $(S,c)$ of $\mathcal{G}$ where $c;E\to \Delta$ such that
$$\mu(\Corr(S,c))>1-\epsilon.$$
\end{definition}

It follows from \cite[Theorem~1.8]{Vizing} that $\chi'_{App}(\mathcal{G})\le
\Delta(\mathcal{G})+1$ for any bounded degree Borel graph $\mathcal{G}$. This
is the corresponding approximate version of Vizing's Theorem. Since we use
this result in the proof of the approximate version of Kőnig line coloring
Theorem (Theorem~\ref{th:main}~(I)) we would like to stress that its proof is
significantly easier than the main result of \cite[Theorem~1.6]{Vizing}. In
particular, the combinatorial idea reflects the proof of the Vizing's Theorem
for finite graphs in the same way as the proof of Theorem~\ref{th:main}~(I)
reflects the proof of Kőnig's line coloring Theorem for finite bipartite
graphs.

The result (I), a consequence of~\cite[Theorem~3]{Laszlo}, mentioned above is equivalent to saying that if we restrict our attention only to \emph{invariant} probability measures, then the approximate chromatic index of a bipartite $\Delta$-regular Borel graph is $\Delta$.

\begin{theorem}\label{th:main}
Let $\mathcal{G}=(V,\mathcal{B},E)$ be a bounded degree Borel graph.
\begin{itemize}
	\item [(I)] Suppose that $\mathcal{G}$ is bipartite.
	Then $\chi'_{App}(\mathcal{G})=\Delta(\mathcal{G})$.
	\item [(II)] Every vertex of $\mathcal{G}$ has even degree if and only if $\mathcal{G}$ admits approximate balanced orientation.
	\item [(III)] Suppose that $\mathcal{G}$ is $2\Delta$-regular where $\Delta\in \mathbb{N}$.
	Then $\mathcal{G}$ admits approximate Schreier decoration.
\end{itemize}
\end{theorem}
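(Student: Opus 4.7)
The plan is to prove (I) as the central technical result, deduce the nontrivial direction of (II) from (I) via the bipartite double cover, and combine (I) and (II) to obtain (III). The easy directions are handled by point-mass measures: if $v \in V$ has odd degree then $|\inn_S(v)|+|\out_S(v)|=\deg(v)$ is odd for every orientation $S$, so $v\notin\Corr(S)$ and the Dirac mass $\mu=\delta_v$ prevents approximate balanced orientation; similarly a point mass at a vertex of maximum degree yields $\chi'_{App}(\mathcal{G})\geq\Delta(\mathcal{G})$.

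For the upper bound in (I), fix $\mu$ and $\epsilon>0$, and apply approximate Vizing \cite[Theorem~1.8]{Vizing} to obtain $c_0 : E \to \Delta(\mathcal{G})+1$ with $\mu(\Corr(c_0)) > 1-\epsilon/2$. The plan is to recolor most of the edges carrying the color $\Delta(\mathcal{G})+1$ using only $\{1,\dots,\Delta(\mathcal{G})\}$ via the classical K\"onig Kempe-chain argument. For a bad edge $(u,v)$ whose endpoints lie in $\Corr(c_0)$, pick colors $\alpha,\beta\in\{1,\dots,\Delta(\mathcal{G})\}$ missing at $u$ and $v$ respectively: if $\alpha=\beta$ then recolor directly; otherwise use that the subgraph of edges colored $\alpha$ or $\beta$ is a disjoint union of paths and even cycles (bipartiteness excludes odd cycles) and the connected component of $u$ is a simple path, so swapping colors along this Kempe chain frees $\beta$ at $u$ and permits recoloring $(u,v)$ with $\beta$. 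To implement this measurably, swaps are executed only along Kempe chains of length at most $N=N(\epsilon)$ and in parallel over a Borel pairwise non-conflicting family, chosen via a Borel coloring of the natural conflict graph on the bad edges; iterating a bounded number of times with $N$ large relative to $\Delta(\mathcal{G})$ and $\epsilon$ reduces the $\mu$-measure of residual bad edges below $\epsilon/2$.

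For (II), suppose $\mathcal{G}$ has bounded even degree with $\Delta(\mathcal{G})=2d$, and form the bipartite double cover $\tilde{\mathcal{G}}=(V\times\{0,1\},\{\{(u,0),(v,1)\}:\{u,v\}\in E\})$ equipped with pushforward $\tilde\mu=\tfrac12(\mu\otimes\delta_0+\mu\otimes\delta_1)$. Apply (I) to $\tilde{\mathcal{G}}$ to obtain a coloring $\tilde c : E(\tilde{\mathcal{G}})\to 2d$ proper at both $(v,0)$ and $(v,1)$ for a set of $v\in V$ of $\mu$-measure at least $1-\epsilon/2$. Pair color classes into $d$ pairs; each pair induces a $2$-regular bipartite subgraph of $\tilde{\mathcal{G}}$, a disjoint union of even cycles. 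Orient these cycles via a Borel rule that is equivariant under the deck involution $\phi:(v,i)\mapsto(v,1-i)$; this equivariance guarantees that the two lifts of each $\mathcal{G}$-edge receive consistent orientations, so the orientation descends to a Borel orientation of $\mathcal{G}$ whose in- and out-degrees at $v$ are inherited from those of $\tilde{c}$ at $(v,0)$, yielding approximate balance. Finally, (III) follows from combining (I) and (II): use (II) on the $2\Delta$-regular $\mathcal{G}$ to produce an approximate balanced orientation $S$, form the bipartite auxiliary graph $\mathcal{H}$ on $V\times\{+,-\}$ with edges $\{(u,+),(v,-)\}$ for each $u\to v\in S$ (which is $\Delta$-regular on $\Corr(S)$), and apply (I) to $\mathcal{H}$ to obtain an approximate proper $\Delta$-edge-coloring, which pulls back to the labels of a Schreier decoration. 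The main obstacle is the measurable implementation of the Kempe-chain augmentation in (I): bounding alternating path lengths, ensuring parallel swaps do not conflict, and iteratively controlling the measure of residual bad edges.
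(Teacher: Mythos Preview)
Your plan for (III) matches the paper exactly, and your overall architecture for (I) --- approximate Vizing followed by Kempe-chain recoloring to eliminate the extra color --- is also the paper's strategy. However, there are gaps in both (I) and (II).

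For (I), the assertion that ``iterating a bounded number of times with $N$ large relative to $\Delta(\mathcal{G})$ and $\epsilon$ reduces the $\mu$-measure of residual bad edges below $\epsilon/2$'' is not justified for an \emph{arbitrary} Borel probability measure $\mu$. The usual double-counting argument (each edge lies on at most $2\Delta(\mathcal{G})$ alternating chains, so the set of edges whose chains all have length $>N$ has measure $\le 2\Delta(\mathcal{G})/N$) uses invariance of the edge measure. For general $\mu$ the paper first dominates $\mu$ by a $\mathcal{G}$-quasi-invariant measure and passes to a quasi-invariant edge measure $\eta$ (Claims~\ref{cl:quasi invariant} and~\ref{cl:measure on edges}), and then replaces the raw chain length by the \emph{cocycle-weighted} length $\sum_{f\in P^d(e)}\rho(f,e)$; the key estimate is the Claim inside Proposition~\ref{pr:main}. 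The recoloring is then carried out by a transfinite (length $<\aleph_1$) induction, not a bounded one. Your bounded-$N$ scheme is essentially the variant the paper notes works only for invariant measures (the remark following Proposition~\ref{pr:main}).

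For (II), the double-cover route has a genuine gap. First, the $2$-regular subgraphs of $\tilde{\mathcal{G}}$ obtained from a pair of color classes are disjoint unions of even cycles \emph{and bi-infinite paths}; you have overlooked the latter, and producing a Borel balanced orientation of a $2$-regular graph with bi-infinite components is exactly the Laczkovich obstruction. Second, the coloring $\tilde c$ produced by (I) has no reason to respect the deck involution $\phi$, so the $2$-regular subgraphs need not be $\phi$-invariant and the phrase ``orient these cycles via a Borel rule that is equivariant under $\phi$'' is not well-posed; even when a component is $\phi$-invariant, $\phi$ can act on a bi-infinite path as an orientation-reversing reflection, in which case no $\phi$-equivariant orientation exists. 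In short, your reduction lands back on the problem of approximately orienting $2$-regular Borel graphs, which is not easier than (II) itself. The paper proves (II) by an entirely different, self-contained argument that does not use (I): it first extracts a maximal Borel family of pairwise edge-disjoint cycles (Proposition~\ref{pr:no cycles}), leaving an acyclic even-degree graph, and then constructs an increasing sequence of Borel path decompositions whose endpoint sets decrease to $\emptyset$ (Proposition~\ref{pr:paths}); orienting along these paths and cycles yields orientations $S_n$ with $\mu(\Corr(S_n))\to 1$ for every $\mu$.
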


Note that $\mathcal{G}$ being bipartite is the same as saying that $\mathcal{G}$ does not contain odd cycles.
This is strictly weaker then $\mathcal{G}$ being a Borel bipartite Borel graph.

Moreover, it will be obvious from the proof that in (II) we can get a somewhat stronger statement: there is a sequence $\{S_n\}_{n\in\mathbb{N}}$ of orientations such that $\mu(\Corr(S_n))\to 1$ for every Borel probability measure $\mu$ and $\mu(\Corr(S_n))\ge 1-\frac{1}{n}$ for every $\mathcal{G}$-invariant measure $\mu$.
Similarly, one can modify the proof of (I) to get that there is a sequence of Borel colorings $\{c_n:E\to \Delta(\mathcal{G})\}_{n\in \mathbb{N}}$ such that $\mu(\Corr(c_n))> 1-\frac{1}{n}$ for every $\mathcal{G}$-invariant Borel probability measure $\mu$.
Consequently, in (III) there is a sequence $(S_n,c_n)$ of Schreier decorations such that $\mu(\Corr(S_n,c_n))>1-\frac{1}{n}$ for every $\mathcal{G}$-invariant measure $\mu$.
This follows from the same principle as the result of Elek and Lippner about a sequence of matchings without short augmenting paths, see~\cite{ElekLippner}.

\begin{remark}
Recently, Kun constructed $\Delta$-regular acyclic measurable bipartite
graphing that does not admit bounded measurable circulation
\cite[Theorem~1]{Kun} for every $\Delta\ge 2$. This shows that (I), that is,
Kőnig's line coloring theorem, (II) and, consequently, (III), that is, the
existence of a Schreier decoration, do not admit full measurable analogue for
every $\Delta\ge 2$.
\end{remark}


\section*{Acknowledgement}
The author would like to thank Oleg Pikhurko and  L\'aszl\'o Mart\'on T\'oth for insightful conversations, and the anonymous referee for providing helpful suggestions.

\section{Preliminaries}

Let $\mathcal{G}=(V,\mathcal{B},E)$ be a bounded degree Borel graph.
Given a vertex $v\in V$, we write $N(v)\subseteq E$ for the set of edges adjacent to $v$, $N_0(v)\subseteq V$ for the set of neighbors of $v$, and $[v]_\mathcal{G}$ for the \emph{$\mathcal{G}$-connected component} of $v$.
A set $A\subseteq V$ is \emph{$k$-sparse}, where $k\in \mathbb{N}$, if every $x\not=y\in A$ are at least $k$ apart in the graph distance.
A \emph{$\mathcal{G}$-saturation} of a set $A\subseteq V$, denoted as $[A]_\mathcal{G}$, is the union of $\mathcal{G}$-connected components of elements from $A$.
A set $A\subseteq V$ is \emph{$\mathcal{G}$-invariant} if $A=[A]_{\mathcal{G}}$.
We denote as $F_\mathcal{G}$ the countable Borel equivalence relation on $V$ that is generated by $E$.

A Borel probability measure $\mu$ on $(V,\mathcal{B})$ is called \emph{$\mathcal{G}$-quasi-invariant} if for every $A\in \mathcal{B}$ we have $\mu(A)=0$ if and only if $\mu([A]_\mathcal{G})=0$.
We denote as $\rho_\mu:F_\mathcal{G}\to (0,+\infty)$ the corresponding cocycle, see~\cite[Proposition~8.3]{KechrisMiller}, i.e., a Borel map that satisfies
$$\mu(B)=\int_A \rho_\mu(\psi(v),v) \ d\mu$$
for every $A,B\in \mathcal{B}$ and a Borel bijection $\psi:A\to B$ such that $\psi(v)\in [v]_\mathcal{G}$ for every $v\in A$.
Moreover, $\mu$ is called \emph{$\mathcal{G}$-invariant}, if $\rho_\mu=1$ when $F_\mathcal{G}$ is restricted to some $\mathcal{G}$-invariant $\mu$-conull set.

\begin{claim}\cite[Proposition~3.2]{Vizing}\label{cl:quasi invariant}
Let $\mathcal{G}=(V,\mathcal{B},E)$ be a bounded degree Borel graph and $\mu$ be a Borel probability measure on $(V,\mathcal{B})$.
Then there is a $\mathcal{G}$-quasi-invariant Borel probability measure $\nu$ on $(V,\mathcal{B})$ that satisfies $\mu(A)\le 2\nu(A)$ for every $A\in \mathcal{B}$.
\end{claim}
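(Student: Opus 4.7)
The plan is to realize $\nu$ as a weighted average of pushforwards of $\mu$ under a countable group that generates $F_{\mathcal{G}}$, with weights arranged so that the identity term alone dominates $\mu$ up to a factor of two.

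Since $\mathcal{G}$ has bounded degree, every $\mathcal{G}$-connected component is countable and $F_{\mathcal{G}}$ is a countable Borel equivalence relation on $(V,\mathcal{B})$. I invoke the Feldman--Moore theorem to obtain a countable group $\Gamma=\{\gamma_n:n\in\omega\}$ of Borel automorphisms of $(V,\mathcal{B})$ whose orbit equivalence relation is $F_{\mathcal{G}}$, enumerated so that $\gamma_0$ is the identity. Then I set
\[
\nu \;:=\; \sum_{n=0}^{\infty} 2^{-n-1}\,(\gamma_n)_{\ast}\mu,
\]
which is a Borel probability measure since $\sum_{n\ge 0} 2^{-n-1}=1$. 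The $n=0$ summand alone yields $\nu(A)\ge \tfrac{1}{2}\mu(A)$ for every $A\in\mathcal{B}$, which is exactly $\mu(A)\le 2\nu(A)$.

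For $\mathcal{G}$-quasi-invariance, suppose $\nu(A)=0$. Then every summand vanishes, so $\mu(\gamma_n^{-1}A)=0$ for every $n\in\omega$; because $\Gamma$ is closed under inverses this means $\mu(\delta A)=0$ for every $\delta\in\Gamma$. For any fixed $m\in\omega$,
\[
\nu(\gamma_m A) \;=\; \sum_{n\ge 0} 2^{-n-1}\,\mu\!\left(\gamma_n^{-1}\gamma_m A\right) \;=\; 0,
\]
since each $\gamma_n^{-1}\gamma_m\in\Gamma$. As $[A]_{\mathcal{G}}=\bigcup_{m\in\omega}\gamma_m A$, countable subadditivity yields $\nu([A]_{\mathcal{G}})=0$; the reverse implication is immediate from $A\subseteq [A]_{\mathcal{G}}$.

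There is no genuine obstacle here: once Feldman--Moore reduces the problem to a countable group action, the geometric weighting tuned to put mass $\tfrac{1}{2}$ on the identity buys the factor $2$ for free. If one preferred to avoid Feldman--Moore, the Kechris--Solecki--Todorcevi\'c Borel edge coloring theorem would allow one to generate $F_{\mathcal{G}}$ by finitely many Borel involutions and carry out the same averaging.
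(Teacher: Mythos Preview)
Your argument is correct: the Feldman--Moore averaging with geometric weights placing mass $\tfrac12$ on the identity is the standard proof, and all the verifications go through as you wrote them. The paper itself does not supply a proof of this claim---it simply cites \cite[Proposition~3.2]{Vizing}---so there is nothing in the present paper to compare against; your proof is precisely the kind of argument one expects behind that citation.
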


\newcommand{\fE}{\operatorname{E}}

We denote as $\mathcal{E}=(\fE,\mathcal{C},I_\mathcal{G})$ the corresponding \emph{line graph}.
That is, $\fE$ is the set of edges of $\mathcal{G}$ viewed as unordered pairs, $\mathcal{C}$ is the $\sigma$-algebra inherited from $[V]^2$ and $(e,f)\in I_{\mathcal{G}}$ if $e\cap f\not =\emptyset$.
It is easy to see that $\mathcal{E}=(\fE,\mathcal{C},I_\mathcal{G})$ is a bounded degree Borel graph.
Note that here we make a formal distinction between edges as ordered pairs $E$ and edges as unordered pairs $\fE$.
However, in next sections we abuse the notation and write simply $E$ instead of $\fE$.

\begin{claim}\cite[Proposition~3.1]{Vizing}\label{cl:measure on edges}
Let $\mathcal{G}=(V,\mathcal{B},E)$ be a bounded degree Borel graph and $\mu$ be a Borel probability measure on $(V,\mathcal{B})$.
Then there is a Borel probability measure $\eta$ on $(\fE,\mathcal{C})$ that satisfies
$$\mu(\{v\in V:\exists e\in A \ v\in e\})\le \Delta(\mathcal{G})\eta(A)$$
for every $A\in \mathcal{C}$.
Moreover, if $\mu$ is $\mathcal{G}$-invariant (quasi-invariant), then $\eta$ is $\mathcal{E}$-invariant (quasi-invariant).
\end{claim}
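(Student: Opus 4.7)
The plan is to distribute the $\mu$-mass at each vertex uniformly among its incident ordered edges, then push down to the unordered edge set. Working with the symmetric Borel set $E\subseteq V\times V$ of ordered edges, define the finite Borel measure
\[
\tilde\eta(B) \;:=\; \int_V \bigl|\{u\in V : (v,u)\in B\}\bigr|\,d\mu(v),
\]
i.e., the restriction to $E$ of the product of $\mu$ with counting measure on $V$. Its total mass is $Z := \int_V \deg(v)\,d\mu(v)\in[0,\Delta(\mathcal{G})]$, and (assuming the non-trivial case $Z>0$) set $\eta:=Z^{-1}p_*\tilde\eta$, where $p\colon E\to\fE$ is the forgetful map $(u,v)\mapsto\{u,v\}$. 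Unpacking the definition gives
\[
\eta(A)\;=\;Z^{-1}\int_V \bigl|\{u : \{u,v\}\in A\}\bigr|\,d\mu(v);
\]
for each $v$ in $V_A:=\{v:\exists e\in A,\,v\in e\}$ the integrand is at least $1$, so $\mu(V_A)\le Z\cdot \eta(A)\le \Delta(\mathcal{G})\cdot\eta(A)$, which is the required inequality.

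\textbf{Preservation of (quasi-)invariance.} Suppose $\mu$ is $\mathcal{G}$-invariant and let $\psi\colon A\to B$ be a partial Borel bijection along the line-graph adjacency $I_\mathcal{G}$ (it suffices to treat generators of $F_\mathcal{E}$ by Feldman--Moore). Using a Borel edge coloring of the line graph together with Lusin--Novikov selection, decompose $A$ into countably many pieces on which the shared vertex $w(e)\in e\cap\psi(e)$ and the non-shared vertices $a(e)\in e\setminus\{w(e)\}$, $b(e)\in\psi(e)\setminus\{w(e)\}$ are Borel functions of $e$. Lift $\psi$ to a partial Borel bijection $\tilde\psi$ on $p^{-1}(A)$ by keeping the shared vertex in its coordinate:
\[
(w(e),a(e))\mapsto(w(e),b(e)),\qquad (a(e),w(e))\mapsto(b(e),w(e)).
\]
On the first piece, $\tilde\psi$ fixes the $\mu$-coordinate and acts fibrewise as a bijection on the counting coordinate, so $\tilde\eta$ is preserved trivially. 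On the second piece, $\tilde\psi$ fixes the counting coordinate and moves the $\mu$-coordinate via the partial $F_\mathcal{G}$-bijection $a(e)\mapsto b(e)$ (valid since $a(e),b(e)\in[w(e)]_\mathcal{G}$ through the common neighbor $w(e)$); $\mathcal{G}$-invariance of $\mu$ now gives $\tilde\eta$-preservation. Summing the two pieces yields $\eta(A)=\eta(B)$, i.e., $\mathcal{E}$-invariance of $\eta$. The quasi-invariant case is identical, with $\rho_\mu$ replacing $1$ on the second piece; $\rho_\eta$ is then read off as a Borel function of $\rho_\mu$ and the decomposition data, so $\eta$ is $\mathcal{E}$-quasi-invariant.

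\textbf{Main obstacle.} The technical content lives in the bookkeeping of the last paragraph --- producing $\tilde\psi$ as a genuine Borel partial bijection from the Feldman--Moore / Lusin--Novikov selections and verifying that the two ``coordinate-preserving'' pieces together cover $p^{-1}(A)$ and $p^{-1}(B)$. Everything else is routine: the covering inequality is just the observation that each $v\in V_A$ has at least one incident edge in $A$, and the transportation of $\tilde\eta$ across $\tilde\psi$ is the mass transport principle applied to the product measure $\mu\otimes(\text{counting on }V)$ restricted to $E$.
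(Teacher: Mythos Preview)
The paper does not actually prove this claim; it is quoted verbatim from \cite[Proposition~3.1]{Vizing} and used as a black box, so there is no proof here to compare against. Your construction is the standard one (push $\mu\times\text{counting}$ from ordered to unordered edges and normalize), and the covering inequality is correct as written.

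One small wrinkle in your invariance argument: on the ``second piece'' you assert that $a(e)\mapsto b(e)$ is a partial $F_\mathcal{G}$-bijection on $V$, but as stated this need not be a function of $v=a(e)$ at all, since several edges $e\in A$ may share the same non-pivot endpoint $a(e)$. You need one more refinement of the decomposition (again Lusin--Novikov, using that $e\mapsto a(e)$ and $e\mapsto b(e)$ are at most $\Delta(\mathcal{G})$-to-one) so that on each piece both $e\mapsto a(e)$ and $e\mapsto b(e)$ are injective; then $a(e)\mapsto b(e)$ really is a partial Borel bijection along $F_\mathcal{G}$, and the invariance/quasi-invariance transfer goes through exactly as you describe.
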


\section{Approximate Kőnig's line coloring Theorem}

Let $c;E\to (\Delta(\mathcal{G})+1)$ be a proper partial edge coloring.
We put
$$m_c(v)=(\Delta(\mathcal{G})+1)\setminus \{c(e):e\in N(v)\}$$
for the set of colors missing at $v\in V$.
Note that $m_c(v)$ is always non-empty.
We fix a distinguished color ${\bf a}\in (\Delta(\mathcal{G})+1)$.
If $\beta,\gamma\in (\Delta(\mathcal{G})+1)$ and $v\in V$, then we write $P^c_{\beta/\gamma}(v)=(f_0,f_1,\dots)$ for the unique (finite or infinite) alternating $\beta/\gamma$-path that starts in $v$, i.e., $c(f_0)=\beta$, $c(f_1)=\gamma$, etc.
In the case that $\beta=\gamma$ we let $P^c_{\beta/\gamma}(v)$ to be either an edge of color $\beta$ adjacent to $v$ or the empty sequence if such an edge does not exist.

Let $e\in E$ be such that $c(e)={\bf a}$.
Using \cite[Theorem~18.10]{Kec} we find a Borel function that assigns to $e$ one of its endpoints $v(e)$ and colors $\beta(e),\gamma(e)\in (\Delta(\mathcal{G})+1)\setminus \{\bf a\}$ such that $\gamma(e)\in m_c(v(e))$ and $\beta(e)\in m_c(w)$ where $e=(v(e),w)$.
We put $P^c(e)=e^\frown P^c_{\beta(e)/\gamma(e)}(v(e))=(e,f_0,f_1,\dots)$.
Note that $w$ is either the last vertex of $P^c(e)$, i.e., $P^c(e)$ is a cycle, or it does not appear in $P^c_{\beta(e)/\gamma(e)}(v(e))$.

Let $d;E\to (\Delta(\mathcal{G})+1)$ be a proper partial edge coloring.
We say that $d$ \emph{improves} $c$ if $\dom(d)=\dom(c)$ and $d^{-1}({\bf a})\subseteq c^{-1}({\bf a})$.
A particular way how to find $d$ that improves $c$ is as follows.
Let $e\in E$ be such that $c(e)={\bf a}$ and suppose that the last vertex (if it exists) of $P^c(e)$ is not $w\in V$ where $e=(v(e),w)$.
Write $f$ for the last edge of $P^c(e)$ if it exists.
Put $d(h)=c(h)$ for every $h\in \dom(c)\setminus P^c(e)$ and define $d(e)=\beta(e)$, $d(f_i)=c(f_{i+1})$ for every $f_i\in P^c(e)\setminus \{f\}$ and $d(f)\in\{\beta(e),\gamma(e)\}\setminus \{c(f)\}$.
One can easily verify that $d$ is indeed a proper partial edge coloring, $\dom(d)=\dom(c)$ and $d^{-1}({\bf a})= c^{-1}({\bf a})\setminus \{e\}$.

\begin{claim}\label{cl:augment}
Let $\mathcal{G}=(V,\mathcal{B},E)$ be a bounded degree Borel graph that does not contain odd cycles.
Let $c;E\to (\Delta(\mathcal{G})+1)$ be a proper partial edge coloring and $A\subseteq c^{-1}({\bf a})$ be a Borel set such that $P^c(e)$ and $P^c(e')$ are vertex disjoint for every $e\not =e'\in A$.
Then there is a proper partial edge coloring $d;E\to (\Delta(\mathcal{G})+1)$ that improves $c$ and $A\cap d^{-1}({\bf a})=\emptyset$.
\end{claim}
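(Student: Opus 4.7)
The plan is to perform, in parallel over all $e \in A$, the local swap described just before the claim. The vertex-disjointness hypothesis on $\{P^c(e)\}_{e \in A}$ will ensure that the individual swaps do not interfere, while the absence of odd cycles in $\mathcal{G}$ will guarantee that each individual swap is well defined.

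First I would establish the key observation: for every $e \in A$ with $e = (v(e), w)$, the path $P^c(e) = (e, f_0, f_1, \dots)$ does not terminate at $w$. Suppose toward contradiction that $P^c(e) = (e, f_0, \dots, f_n)$ with $f_n$ incident to $w$. Since $\beta(e) \in m_c(w)$, no edge of color $\beta(e)$ is incident to $w$, so $c(f_n) = \gamma(e)$, and the alternating $\beta/\gamma$ pattern then forces $n$ to be odd. But then $e$ together with $f_0, \dots, f_n$ forms a simple cycle of length $n + 2$, which is odd, contradicting the hypothesis that $\mathcal{G}$ contains no odd cycles. Consequently, the swap rule recalled before the statement of the claim is applicable to every $e \in A$.

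Next I would define $d$ by gluing. Put $d(h) = c(h)$ for every $h \in \dom(c) \setminus \bigcup_{e \in A} P^c(e)$, and on each $P^c(e)$ apply the swap rule verbatim. Borelness follows because $A$ is Borel and the assignment $e \mapsto (v(e), \beta(e), \gamma(e), P^c(e))$ can be chosen Borel using \cite[Theorem~18.10]{Kec}, so the swap can be implemented uniformly and measurably across all $e \in A$.

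Finally I would verify the required properties. The equalities $\dom(d) = \dom(c)$ and $A \cap d^{-1}({\bf a}) = \emptyset$ are immediate from the swap rule. Properness of $d$ is the delicate point: by the discussion preceding the claim, each individual swap produces a proper partial edge coloring locally, so the only possible obstruction is interaction between distinct paths. However, the vertex-disjointness of $\{P^c(e)\}_{e \in A}$ implies that each vertex of $\mathcal{G}$ is touched by at most one swap, so the local propernesses paste together to global properness of $d$. The main obstacle in the argument is the first step, ruling out that $P^c(e)$ ends at $w$, which is precisely where the no-odd-cycle hypothesis is used; the remaining steps are a routine Borel gluing made possible by the vertex-disjointness assumption.
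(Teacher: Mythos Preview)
Your proposal is correct and follows essentially the same approach as the paper's proof: both argue that the no-odd-cycle hypothesis rules out $P^c(e)$ closing up at $w$, then run the single-path swap simultaneously over all $e\in A$, using vertex-disjointness for well-definedness and properness, and the Borel assignment $e\mapsto P^c(e)$ for measurability. Your write-up is more explicit than the paper's (in particular you spell out the parity count showing the putative cycle has odd length), but the underlying argument is the same.
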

\begin{proof}
The fact that the $\mathcal{G}$ does not contain odd cycles implies that $w\in V$ is not the last vertex of $P^c(e)$ for any $e\in A$ where $(v(e),w)=e$.
Running the augmenting procedure defined in the preceding paragraph for all $P^c(e)$ simultaneously gives a partial map $d;E\to (\Delta(\mathcal{G})+1)$.
It follows from the fact that $\{P^c(e)\}_{e\in A}$ is a collection of pairwise vertex disjoint paths that $d$ is well-defined proper partial edge coloring with $\dom(d)=\dom(c)$ and $d^{-1}({\bf a})\cap A=\emptyset$.
Moreover, since $e\mapsto P^c(e)$ is a Borel assignment we see that $d;E\to (\Delta(\mathcal{G})+1)$ is a partial Borel map and the proof is finished.
\end{proof}

\begin{proposition}\label{pr:main}
Let $\mathcal{G}=(V,\mathcal{B},E)$ be a bounded degree Borel graph that does not contain odd cycles, $c;E\to (\Delta(\mathcal{G})+1)$ be a proper partial edge coloring, $\eta$ be an $\mathcal{E}$-quasi-invariant Borel probability measure on $(E,\mathcal{C})$, where $\mathcal{E}=(E,\mathcal{C},I_\mathcal{G})$ is the line graph, and $\epsilon>0$.
Then there is a proper partial edge coloring $d;E\to (\Delta(\mathcal{G})+1)$ that improves $c$ such that $\eta(d^{-1}({\bf a}))<\epsilon$.
\end{proposition}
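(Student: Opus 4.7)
The plan is to iteratively apply Claim~\ref{cl:augment} to reduce the $\eta$-measure of ${\bf a}$-colored edges below $\epsilon$. The task at each iteration is to produce a Borel set $A \subseteq c^{-1}({\bf a})$ of substantial $\eta$-measure for which the paths $\{P^c(e) : e \in A\}$ are pairwise vertex-disjoint, upon which Claim~\ref{cl:augment} produces an improvement $d$ satisfying $\eta(d^{-1}({\bf a})) = \eta(c^{-1}({\bf a})) - \eta(A)$. Finitely many iterations reach $\eta<\epsilon$ provided each iteration decrements $\eta(c^{-1}({\bf a}))$ by a fixed quantitative amount.

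To find a substantial $A$, fix a threshold $M \in \mathbb{N}$ and set $E^M := \{e \in c^{-1}({\bf a}) : |P^c(e)| \le M\}$. Define the Borel auxiliary graph $\mathcal{H}_M$ on $E^M$ by $e \sim_{\mathcal{H}_M} e'$ iff $P^c(e)$ and $P^c(e')$ share a vertex of $\mathcal{G}$. Each path in $E^M$ visits at most $M+2$ vertices, and for each vertex $v$ the number of paths $P^c(e')$ passing through $v$ is bounded by a constant $K = K(\Delta)$: the candidate starting ${\bf a}$-edge $e'$ is determined by $v$, by the alternating color pair $(\beta(e'),\gamma(e'))$, and by the direction of the walk backwards to $v(e')$, using also that each vertex carries at most one ${\bf a}$-edge. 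Hence $\mathcal{H}_M$ is Borel of degree at most $D = D(M,\Delta)$, and by the Kechris--Solecki--Todorcevic theorem it admits a Borel proper $(D+1)$-coloring; the largest color class $A \subseteq E^M$ satisfies $\eta(A) \ge \eta(E^M)/(D+1)$ and is an independent set in $\mathcal{H}_M$, so its paths are pairwise vertex-disjoint as required.

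To convert $\eta(A)$ into a reduction proportional to $\eta(c^{-1}({\bf a}))$ one needs an upper bound on the tail $\eta(\{e : |P^c(e)| > M\})$. A bounded-multiplicity estimate shows that each edge $f \in E$ lies on at most $N_0 = N_0(\Delta)$ of the paths $P^c(e)$, since for each color pair $(\beta,\gamma)$ the $\beta/\gamma$-alternating path through $f$ is unique and only its (at most two) endpoints can serve as starting vertices $v(e')$, at each of which the ${\bf a}$-edge $e'$ is unique. Combining this with the $\mathcal{E}$-quasi-invariance of $\eta$ by a mass-transport argument---applied on Borel subsets of $E$ where the cocycle $\rho_\eta$ is uniformly bounded, which exhaust $E$ up to arbitrarily small $\eta$-measure---yields $\eta(\{e : |P^c(e)| > M\}) \to 0$ as $M \to \infty$, uniformly over all admissible $c$.

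Putting the pieces together, choose $M$ large enough that $\eta(\{|P^c(e)| > M\}) < \epsilon/2$ whenever $\eta(c^{-1}({\bf a})) \ge \epsilon$; each iteration then decrements $\eta(c^{-1}({\bf a}))$ by at least $\epsilon/(2(D+1))$, and after finitely many iterations we obtain an improvement $d$ with $\eta(d^{-1}({\bf a})) < \epsilon$. The main obstacle is the tail estimate: since $\eta$ is only quasi-invariant and its cocycle $\rho_\eta$ need not be bounded, the mass-transport calculation does not apply verbatim, and one must carefully truncate to subsets of bounded cocycle, paying a small $\eta$-measure cost that can be absorbed into the $\epsilon$ budget.
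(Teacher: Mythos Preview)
Your argument has a genuine gap at the tail estimate, and you correctly flag it yourself in the final paragraph without actually closing it. For an $\mathcal{E}$-\emph{invariant} $\eta$ the mass-transport bound $\eta(\{e:|P^c(e)|>M\})\le N_0/M$ holds uniformly in $c$, and then your finite iteration works; indeed the paper remarks immediately after the proof that precisely this simpler scheme suffices in the invariant case. For merely quasi-invariant $\eta$, however, the cocycle identity controls $\int \sum_{f\in P^c(e)}\rho_\eta(f,e)\,d\eta(e)$, not $\int |P^c(e)|\,d\eta(e)$: combinatorially long paths can have arbitrarily small $\rho_\eta$-weighted length, so no uniform tail bound on $\{|P^c(e)|>M\}$ follows. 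The truncation you sketch does not rescue this, because the paths $P^c(e)$ need not remain inside the truncated set, and what you would need bounded is $\rho_\eta(f,e)$ for $f$ ranging over the entire (possibly very long) path, not merely for $f$ close to $e$; moreover the truncation would have to be uniform over all the intermediate colorings produced by the iteration, which you have not arranged.

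The paper resolves this tension differently. It replaces combinatorial path length by the cocycle-weighted length $\sum_{f\in P^d(e)}\rho_\eta(f,e)$, for which the mass-transport estimate is immediate: if every $\mathbf{a}$-edge has weighted length at least $2\Delta(\mathcal{G})L$ then $\eta(d^{-1}(\mathbf{a}))\le 1/L$. The price is that ``short weighted length'' no longer implies bounded combinatorial length, so one cannot extract an independent set of fixed proportion via a bounded-degree KST coloring as you do. Instead the paper settles for \emph{some} positive-measure reduction at each step (stratifying the short-weighted-length edges by their combinatorial length $k$, picking a $2k$-sparse positive-measure subset, and applying Claim~\ref{cl:augment}), and runs a transfinite induction of countable length, using Borel--Cantelli to pass through limit ordinals; termination before $\aleph_1$ is forced because one cannot have uncountably many pairwise disjoint positive-measure decrements. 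Your quantitative finite-step scheme is a genuinely different route, but as written it is only valid in the invariant case.
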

\begin{proof}
First we show a sufficient condition for $d$ to satisfy the conclusion of the statement.
Write $\rho$ for the cocycle of $\eta$.

\begin{claim*}
Let $L\in \mathbb{N}$ and $d;E\to (\Delta(\mathcal{G})+1)$ be a proper partial edge coloring such that
$$\sum_{f\in P^d(e)}\rho(f,e)\ge 2\Delta(\mathcal{G}) L$$
for every $e\in E$ such that $d(e)=\mathbf{a}$.
Then $\eta(d^{-1}(\mathbf{a}))\le \frac{1}{L}$.
\end{claim*}
\begin{proof}
Let $f\in E$.
Then there are at most $2\Delta(\mathcal{G})$ edges $e\in E$ such that $d(e)=\mathbf{a}$ and $f\in P^d(e)$.
This is because there are $\Delta$ choices for a color $\beta$ and two choices of a direction of the path that contains $f$ and alternates colors $\beta$ and $c(f)$.
Each edge $e$ that satisfies $f\in P^c(e)$ must be incident to an endpoint of one of these paths.

The cocycle relation gives
$$2\Delta(\mathcal{G}) L\eta(d^{-1}(\mathbf{a})) \le \int_{d^{-1}(\mathbf{a})} \sum_{f\in P^d(e)}\rho(f,e) \ d\eta\le 2\Delta(\mathcal{G})$$
and that finishes the proof.
\end{proof}

Let $L\in \mathbb{N}$ be such that $\frac{1}{L}<\epsilon$.
Set $d_0=c$ and $E_0=E$.
We construct by induction on all countable ordinals a sequence $\{d_{\kappa}\}_{\kappa<\aleph_1}$ of proper partial edge colorings and a sequence $\{E_\kappa\}_{\kappa<\aleph_1}$ of Borel $\eta$-conull $\mathcal{E}$-invariant subsets of $E$ such that
\begin{enumerate}
	\item $d_\kappa$ is an improvement of $c$,
	\item $E_\kappa \subseteq E_\lambda$ whenever $\lambda\le \kappa<\aleph_1$,
	\item if $\kappa<\aleph_1$ is a limit ordinal, then $\lim_{\lambda\to\kappa} d_\lambda(e)=d_\kappa(e)$ whenever $e\in E_\kappa\cap \dom(c)$,
	\item $E_\kappa\cap B_\kappa\subseteq E_\lambda \cap B_\lambda$ whenever $\lambda\le \kappa<\aleph_1$,
	\item if $\eta(A_\kappa)>0$, then $\eta(B_{\kappa}\setminus B_{\kappa+1})>0$
\end{enumerate}
where $B_{\kappa}=d_\kappa^{-1}(\mathbf{a})$ and $A_{\kappa}=\left\{e\in B_\kappa:\sum_{f\in P^{d_\kappa}(e)}\rho(f,e)<2\Delta(\mathcal{G}) L\right\}$.

Once we have this then we take the minimal $\kappa_0<\aleph_1$ such that $\eta(A_{\kappa_0})=0$ and define $d=d_{\kappa_0}$.
Note that such $\kappa_0<\aleph_1$ exists by (4) and (5).
By the Claim we have $\eta(d^{-1}({\bf a}))\le \frac{1}{L}<\epsilon$.
Hence, $d$ works as required.

Let $\kappa<\aleph_1$ and assume that $d_\kappa$ is defined and $\eta(A_\kappa)>0$.
There is a pair $\beta,\gamma\in (\Delta(\mathcal{G})+1)\setminus \{{\bf a}\}$ and $k\in \mathbb{N}\cup \{+\infty\}$ such that the Borel set $A'$ of those $e\in A_\kappa$ such that $P^{d_\kappa}(e)\setminus \{e\}$ is an alternating $\beta/\gamma$ path of length $k$ satisfies $\eta(A')>0$.
If $k=+\infty$, then find $3$-sparse Borel set $A$ that is a subset of $A'$ and $\eta(A)>0$.
This can be done by \cite[Proposition~4.6]{KST}.
Note that $\left\{P^{d_\kappa}(e)\right\}_{e\in A}$ is a collection of pairwise vertex disjoint paths.
If $k<+\infty$, then find a $2k$-sparse Borel set $A$ that is a subset of $A'$ and $\eta(A)>0$, again by \cite[Proposition~4.6]{KST}.
Then $\left\{P^{d_\kappa}(e)\right\}_{e\in A}$ is a collection of pairwise vertex disjoint paths.
Define $d_{\kappa+1}$  as in Claim~\ref{cl:augment} applied for $A$ and $d_\kappa$.
Observe that $C_\kappa=\{e\in \dom(c): d_{\kappa}(e)\not=d_{\kappa+1}(e)\}$ satisfies $\eta(C_\kappa)\le 2\Delta(\mathcal{G}) L\eta(B_\kappa\setminus B_{\kappa+1})$.

Let $\kappa<\aleph_1$ be a limit ordinal and $d_\lambda$ be defined for every $\lambda<\kappa$.
We have
$$\sum_{\lambda<\kappa}\eta(C_\lambda)\le 2\Delta(\mathcal{G}) L\sum_{\lambda<\kappa}\eta(B_\lambda\setminus B_{\lambda+1})\le 2\Delta(\mathcal{G}) L,$$
because $\{B_\lambda\setminus B_{\lambda+1}\}_{\lambda<\kappa}$ is a pairwise disjoint collection of sets when restricted to the $\eta$-conull set $\bigcap_{\lambda<\kappa}E_\lambda$ by (2) an (4).
The Borel-Cantelli lemma implies that there is a $\eta$-conull $\mathcal{E}$-invariant set $H\subseteq E$ such that for every $e\in H\cap \dom(c)$ there is $\lambda_e<\kappa$ such that $e\not \in C_\lambda$ for every $\lambda_e\le \lambda<\kappa$.
Define $E_\kappa=H\cap \bigcap_{\lambda<\kappa} E_{\lambda}$.
It follows from (2) and (3), that if $e\in E_\kappa\cap \dom(c)$, then $d_\lambda(e)$ is constant for every $\lambda_e\le \lambda <\kappa$, i.e., $d'(e)=\lim_{\lambda\to \kappa}d_{\lambda}(e)$ exists for every $e\in E_\kappa\cap \dom(c)$.
Define $d_\kappa=d'$ on $E_\kappa$ and $d_\kappa=c$ outside of $E_\kappa$.
\end{proof}

We remark that for a $\mathcal{E}$-invariant measures the proof can be modified as follows.
Fix a sequence of $4\Delta(\mathcal{G})L$-sparse Borel sets $\{C_l\}_{l\in \mathbb{N}}\subseteq \mathcal{C}$ such that every $e\in E$ appears infinitely often.
The induction runs over all natural numbers in the same spirit.
Namely, define $A_l=\{e\in B_l: |P^{d_l}(e)|<2\Delta(\mathcal{G})L\}$ and note that $\left\{P^{d_l}(e)\right\}_{e\in A_l\cap C_l}$ is a collection of pairwise vertex disjoint paths.
Use Claim~\ref{cl:augment} to define $d_{l+1}$.
Define $d(e)=\lim_{l\to \infty} d_l(e)$.
It is easy to see that $d$ is defined for every $e\in \dom(c)$ and $|P^d(e)|\ge 2\Delta(\mathcal{G})L$ for every $e\in d^{-1}({\bf a})$.
This implies that $d$ improves $c$ and satisfies $\eta(d^{-1}({\bf(a)}))<\frac{1}{L}$ for every $\mathcal{E}$-invariant measure $\eta$.

Before we formulate corollaries of the preceding result we would like to point out that the proof of the approximate version of Vizing's Theorem, see~\cite[Theorem~1.8,~Section~5]{Vizing}, follows the same strategy as the proof of Proposition~\ref{pr:main}.
Namely, modify a given coloring such that it does not contain short weighted Vizing chains, see~\cite[Section~2.4]{Vizing}.
Similarly as in the preceding paragraph, there is an improvement that works simultaneously for every $\mathcal{E}$-invariant measure.

\begin{proof}[Proof of Theorem~\ref{th:main}~(I)]
Since we consider any Borel probability measure and there is $v\in V$ of degree $\Delta(\mathcal{G})$ by the definition of $\Delta(\mathcal{G})$ we have that $\chi'_{App}(\mathcal{G})\ge  \Delta(\mathcal{G})$.

Let $\mu$ be a Borel probability measure on $(V,\mathcal{B})$.
By the approximate measurable version of Vizing's Theorem, see~\cite[Theorem~1.8]{Vizing}, we find a proper partial edge coloring $c;E\to (\Delta(\mathcal{G})+1)$ such that
$$\mu\left(\left\{v\in V:N(v)\subseteq \dom(c)\right\}\right)>1-\frac{\epsilon}{2}.$$
Consider the $\mathcal{E}$-quasi-invariant Borel probability measure $\eta$ on $(E,\mathcal{C})$ that is given by a consecutive application of Claims~\ref{cl:quasi invariant},~\ref{cl:measure on edges} and apply Proposition~\ref{pr:main} with $\frac{\epsilon}{4\Delta(\mathcal{G})}$ in place of $\epsilon$.
This yields a proper partial edge coloring $d';E\to (\Delta(\mathcal{G})+1)$ that improves $c$ and $\eta({d'}^{-1}({\bf a}))<\frac{\epsilon}{4\Delta(\mathcal{G})}$.
Consider an edge coloring $d:E\to \Delta(\mathcal{G})$ that agrees with $d'$ on the set $\bigcup_{\beta\in (\Delta(\mathcal{G})+1)\setminus \{{\bf a}\}} d'^{-1}(\beta)$.

Put $X=\{v\in V:N(v)\subseteq \dom(c)\}$ and $Y=\{v\in V:N(v)\cap d'^{-1}({\bf a})=\emptyset\}$.
Let $v\in X\cap Y$.
Then it is easy to see that $v\in \Corr(d)$ and we have
$$\mu(V\setminus \Corr(d))\le \mu(V\setminus X)+\mu(V\setminus Y)< \frac{\epsilon}{2}+2\Delta(\mathcal{G})\frac{\epsilon}{4\Delta(\mathcal{G})}<\epsilon$$
by the definition of $\eta$.
That finishes the proof.
\end{proof}

\begin{corollary}\label{cor:matchings}
Let $\mathcal{G}=(V,\mathcal{B},E)$ be a Borel graph that is $\Delta$-regular and that does not contain odd cycles.
Then for every Borel probability measure $\mu$ on $(V,\mathcal{B})$ and $\epsilon>0$ there is a Borel matching $M\subseteq E$ such that
$$\mu(\{v\in V:M\cap N(v)=\emptyset\})<\epsilon.$$
\end{corollary}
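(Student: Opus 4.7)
The plan is to reuse the construction from the proof of Theorem~\ref{th:main}~(I), but to extract the matching from the \emph{partial} proper edge coloring $d'$ with $\Delta+1$ colors that appears en route, rather than from the total (but possibly improper) $\Delta$-coloring produced at the very end. The crucial observation is that each single-color class of a proper partial edge coloring is automatically a Borel matching, whereas a single-color class of a merely-mostly-proper total coloring need not be.

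Concretely, I would rerun the two ingredients of the proof of Theorem~\ref{th:main}~(I) with thresholds rescaled for the given $\epsilon$. First, the approximate measurable Vizing theorem \cite[Theorem~1.8]{Vizing} yields a proper partial edge coloring $c : E \to \Delta+1$ with $\mu(\{v : N(v) \subseteq \dom(c)\}) > 1 - \epsilon/2$. Second, letting $\eta$ be the $\mathcal{E}$-quasi-invariant edge measure obtained from $\mu$ via Claims~\ref{cl:quasi invariant} and~\ref{cl:measure on edges}, Proposition~\ref{pr:main} improves $c$ to a proper partial $(\Delta+1)$-coloring $d'$ with $\eta(d'^{-1}(\mathbf{a})) < \epsilon/(4\Delta)$. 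The $\mu$--$\eta$ comparison then gives $\mu(\{v : N(v) \cap d'^{-1}(\mathbf{a}) \neq \emptyset\}) < \epsilon/2$, so the set
$$Z := \{v \in V : N(v) \subseteq \dom(d')\ \text{and}\ N(v) \cap d'^{-1}(\mathbf{a}) = \emptyset\}$$
satisfies $\mu(Z) > 1 - \epsilon$.

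Now I fix any color $\beta \in (\Delta+1) \setminus \{\mathbf{a}\}$ and set $M := d'^{-1}(\beta)$. Since $d'$ is a proper partial edge coloring, at every vertex at most one edge has color $\beta$, so $M$ is a Borel matching. On the other hand, for each $v \in Z$ the $\Delta$-regularity of $\mathcal{G}$ forces the $\Delta$ incident edges at $v$ to carry $\Delta$ distinct colors drawn from the $\Delta$-element set $(\Delta+1) \setminus \{\mathbf{a}\}$; in particular color $\beta$ appears exactly once at $v$, so $M \cap N(v) \neq \emptyset$. Hence $\{v \in V : M \cap N(v) = \emptyset\} \subseteq V \setminus Z$, whose $\mu$-measure is below $\epsilon$, as required. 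I do not anticipate a real obstacle: essentially all of the work is already encapsulated in Theorem~\ref{th:main}~(I), and the only new observation is that its intermediate partial $(\Delta+1)$-coloring decomposes into Borel matchings by color class, any non-$\mathbf{a}$ one of which covers almost all of $V$.
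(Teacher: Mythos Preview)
Your argument is correct. The paper's own proof is the two-liner: invoke Theorem~\ref{th:main}~(I) to get an edge coloring $c:E\to\Delta$ with $\mu(\Corr(c))>1-\epsilon$, and set $M=c^{-1}(\beta)$ for any $\beta\in\Delta$. Your route differs in that you deliberately step back \emph{inside} the proof of Theorem~\ref{th:main}~(I) and harvest the matching from the intermediate proper partial $(\Delta+1)$-coloring $d'$ rather than from the final total $\Delta$-coloring. The point you raise is well taken: the coloring $c$ delivered by Theorem~\ref{th:main}~(I) is total but only \emph{locally} proper on $\Corr(c)$, so a color class $c^{-1}(\beta)$ may contain two edges sharing a vertex in $V\setminus\Corr(c)$ and hence fail to be a matching as stated. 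The paper's line can be repaired cheaply (e.g.\ by discarding from $c^{-1}(\beta)$ every edge that is not the unique $\beta$-edge at both its endpoints, and applying Theorem~\ref{th:main}~(I) with a smaller $\epsilon$ so that most vertices have all neighbors in $\Corr(c)$), but your version sidesteps the issue entirely: since $d'$ is genuinely proper on its domain, each $d'^{-1}(\beta)$ is a Borel matching outright, and the $\Delta$-regularity plus pigeonhole argument on $Z$ is exactly what is needed to show it misses few vertices.
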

\begin{proof}
By Theorem~\ref{th:main}~(I) we find an edge coloring $c:E\to \Delta$ such that $\mu(\Corr(c))>1-\epsilon$.
Let $\beta\in \Delta$.
Then $M=c^{-1}(\beta)$ works as required.
\end{proof}

Note the next result needs the full measurable Vizing's Theorem for graphings.

\begin{corollary}\cite[Theorem~3]{Laszlo}
Let $\mathcal{G}=(V,\mathcal{B},E)$ be a bounded degree Borel graph that does not contain odd cycles, $\mu$ be a $\mathcal{G}$-invariant Borel probability measure on $(V,\mathcal{B})$ and $\epsilon>0$.
Then there is a full $\mu$-measurable proper edge coloring $c:E\to (\Delta(\mathcal{G})+1)$ such that
$$\mu \left(\left\{v\in V:\mathbf{a}\not\in m_c(v)\right\}\right)<\epsilon.$$
\end{corollary}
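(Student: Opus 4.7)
The plan is to combine the full measurable Vizing's Theorem with our approximate recoloring result, Proposition~\ref{pr:main}. The key observation is that Proposition~\ref{pr:main} preserves the domain of the partial edge coloring, so if we feed it a coloring defined on all of $E$ (on a $\mu$-conull set) we still get a full coloring out, with the additional feature that the preimage of $\mathbf{a}$ has small $\eta$-measure. Translating smallness of $\eta$ back to $\mu$ via Claim~\ref{cl:measure on edges} then controls exactly the set of vertices where $\mathbf{a}$ fails to be a missing color.

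In order, I would proceed as follows. First, invoke the full measurable Vizing's Theorem \cite[Theorem~1.6]{Vizing} to produce a proper edge coloring $c_0 \colon E \to (\Delta(\mathcal{G})+1)$ defined on a $\mathcal{G}$-invariant $\mu$-conull subset of $V$; after deleting a null invariant set we may assume $\dom(c_0) = E$. Second, because $\mu$ is $\mathcal{G}$-invariant, Claim~\ref{cl:measure on edges} supplies an $\mathcal{E}$-invariant (in particular $\mathcal{E}$-quasi-invariant) Borel probability measure $\eta$ on $(E,\mathcal{C})$ with
\[
\mu\!\left(\{v \in V : \exists\, e \in A, \ v \in e\}\right) \le \Delta(\mathcal{G}) \, \eta(A)
\]
for every $A \in \mathcal{C}$. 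Third, apply Proposition~\ref{pr:main} to $c_0$ and $\eta$ with $\epsilon/\Delta(\mathcal{G})$ in place of $\epsilon$; this yields a proper partial edge coloring $c \colon E \to (\Delta(\mathcal{G})+1)$ with $\dom(c) = \dom(c_0) = E$, with $c^{-1}(\mathbf{a}) \subseteq c_0^{-1}(\mathbf{a})$, and with $\eta(c^{-1}(\mathbf{a})) < \epsilon/\Delta(\mathcal{G})$.

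Fourth, observe that $\{v \in V : \mathbf{a} \notin m_c(v)\}$ is precisely $\{v \in V : \exists\, e \in c^{-1}(\mathbf{a}), \ v \in e\}$, so by the defining inequality of $\eta$,
\[
\mu\!\left(\{v \in V : \mathbf{a} \notin m_c(v)\}\right) \le \Delta(\mathcal{G}) \, \eta(c^{-1}(\mathbf{a})) < \epsilon,
\]
which is exactly the conclusion.

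There is no real obstacle beyond bookkeeping, given the preceding machinery; the only point requiring care is to verify that Proposition~\ref{pr:main} indeed preserves the domain of the input coloring (which is built into the definition of \emph{improves}), so that starting from a full coloring we end with a full one, and that $\mathcal{E}$-invariance of $\eta$ is strong enough to invoke the proposition (it is, since $\mathcal{E}$-invariance implies $\mathcal{E}$-quasi-invariance with constant cocycle $\rho_\eta \equiv 1$). If one wishes, the alternative construction mentioned in the remark following Proposition~\ref{pr:main}, which works uniformly for all $\mathcal{E}$-invariant measures, can be used here instead, but it is not required for this single fixed $\eta$.
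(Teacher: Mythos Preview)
Your proposal is correct and follows exactly the approach indicated in the paper: combine the full measurable Vizing's Theorem \cite[Theorem~1.6]{Vizing} (or the bipartite version \cite[Theorem~1.5]{CLP}) with Proposition~\ref{pr:main}. You have simply spelled out the details---passing to the edge measure $\eta$ via Claim~\ref{cl:measure on edges}, applying Proposition~\ref{pr:main} with the appropriately rescaled $\epsilon$, and translating back using the defining inequality for $\eta$---that the paper's one-line proof leaves implicit.
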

\begin{proof}
Combine measurable Vizing's Theorem for invariant measure $\mu$, see~\cite[Theorem~1.6]{Vizing} or bipartite version \cite[Theorem~1.5]{CLP}, and Proposition~\ref{pr:main}.
\end{proof}

\section{Approximate balanced orientation}

Recall that $(E,\mathcal{C})$ is a standard Borel space of edges of a Borel graph $\mathcal{G}=(V,\mathcal{B},E)$ and $[E]^{<\infty}$ is the standard Borel space of all finite subsets of $E$.
One can easily verify that the set of all finite paths $\mathfrak{T}\subseteq [E]^{<\infty}$ and cycles $\mathfrak{C}\subseteq [E]^{<\infty}$ of $\mathcal{G}$ are Borel sets.

\begin{proposition}\label{pr:no cycles}
Let $\mathcal{G}=(V,\mathcal{B},E)$ be a bounded degree Borel graph such that every vertex has even degree.
Then there is a Borel set $\mathfrak{M}\subseteq \mathfrak{C}$ such that $C\cap D=\emptyset$ for every $C\not=D\in \mathfrak{M}$ that is maximal with this property.
In particular, $\mathcal{H}=(V,\mathcal{B},E\setminus \bigcup_{C\in \mathfrak{M}} C)$ is an acyclic Borel graph such that every vertex has even degree bounded by $\Delta(\mathcal{G})$.
\end{proposition}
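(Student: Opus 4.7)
I would build $\mathfrak{M}$ by a countable recursion stratified by cycle length, using the standard existence of Borel maximal independent sets in bounded-degree Borel graphs. The set $\mathfrak{C}$ of finite cycles of $\mathcal{G}$ is Borel and splits into Borel strata $\mathfrak{C}=\bigsqcup_{n\ge 3}\mathfrak{C}_n$ by length.

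Inductively, suppose pairwise edge-disjoint Borel families $\mathfrak{M}_3,\dots,\mathfrak{M}_{n-1}\subseteq \mathfrak{C}$ have been constructed. Let $X_n=\bigcup_{k<n}\bigcup_{C\in\mathfrak{M}_k}C\subseteq E$. Since each edge lies in at most one of the already-chosen cycles, the Lusin--Novikov uniformization theorem guarantees that $X_n$ is Borel. Form the Borel set $\mathfrak{C}_n'=\{C\in\mathfrak{C}_n:C\cap X_n=\emptyset\}$ and put on it the \emph{conflict graph} $\mathcal{L}_n$, where $C\sim D$ iff $C\cap D\ne\emptyset$. Because $\mathcal{G}$ has bounded degree, only finitely many cycles of length $n$ pass through any given edge, so $\mathcal{L}_n$ is a Borel graph of finite maximum degree. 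By \cite[Proposition~4.6]{KST} (existence of Borel maximal independent sets in bounded-degree Borel graphs), pick a Borel maximal $\mathcal{L}_n$-independent set $\mathfrak{M}_n\subseteq \mathfrak{C}_n'$. Set $\mathfrak{M}=\bigcup_{n\ge 3}\mathfrak{M}_n$; this is a countable union of Borel sets, hence Borel, and by construction its members are pairwise edge-disjoint.

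Maximality of $\mathfrak{M}$ is checked stratum by stratum: given $C\in \mathfrak{C}\setminus \mathfrak{M}$ of length $n$, either $C$ meets $X_n$ and conflicts with some $D\in\mathfrak{M}_k$ for $k<n$, or $C\in\mathfrak{C}_n'\setminus \mathfrak{M}_n$, in which case the $\mathcal{L}_n$-maximality of $\mathfrak{M}_n$ produces a $D\in\mathfrak{M}_n$ with $C\cap D\ne\emptyset$. For the ``in particular'' clause, one more application of Lusin--Novikov (edges belong to at most one cycle in $\mathfrak{M}$) shows that $\bigcup_{C\in\mathfrak{M}}C$ is Borel, so $\mathcal{H}$ is a Borel graph; the degree of each vertex $v$ in $\mathcal{H}$ equals its degree in $\mathcal{G}$ minus $2\cdot |\{C\in\mathfrak{M}:v\in V(C)\}|$, which is still even and still at most $\Delta(\mathcal{G})$; and any cycle of $\mathcal{G}$ surviving in $\mathcal{H}$ would be edge-disjoint from $\mathfrak{M}$, contradicting maximality.

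The only delicate point is verifying Borel measurability at each step, in particular that $X_n$ and $\bigcup_{C\in\mathfrak{M}}C$ are Borel rather than merely analytic. This is precisely where edge-disjointness is crucial: because each edge lies in at most one selected cycle, the relevant projection maps have singleton fibres, so Lusin--Novikov applies directly. Everything else is a routine combination of Borel stratification and the KST maximal independent set theorem.
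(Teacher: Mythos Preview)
Your argument is correct and follows essentially the same greedy strategy as the paper: produce a countable Borel decomposition of $\mathfrak{C}$ into pieces on which disjointness is easy to enforce, and then build $\mathfrak{M}$ greedily piece by piece. The only difference is in how the decomposition is obtained---the paper invokes \cite[Lemma~7.3]{KechrisMiller} to get a countable Borel coloring of the intersection graph on all of $\mathfrak{C}$ at once, whereas you stratify by cycle length and then use \cite{KST} on the resulting bounded-degree conflict graphs; your extra care with Lusin--Novikov to verify that $X_n$ and $\bigcup_{C\in\mathfrak{M}}C$ are Borel is more explicit than the paper bothers with, but entirely sound.
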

\begin{proof}
It follows from \cite[Lemma~7.3]{KechrisMiller} that the intersection graph on $\mathfrak{C}$ has a countable Borel chromatic number, i.e., there is a sequence of Borel sets $\{\mathfrak{C}_i\}_{i\in \mathbb{N}}$ such that $C\cap D=\emptyset$ whenever $C\not=D\in \mathfrak{C}_i$ and $\mathfrak{C}=\bigcup_{i\in \mathbb{N}}\mathfrak{C}_i$.
Let $\mathfrak{D}_0=\mathfrak{C}_0$ and define inductively
$$\mathfrak{D}_{i+1}=\mathfrak{D}_i\cup \{C\in \mathfrak{C}_{i+1}:\forall D\in \mathfrak{D}_n \ C\cap D=\emptyset\}.$$
It is easy to see that $\mathfrak{M}=\bigcup_{i\in \mathbb{N}} \mathfrak{D}_i$ works as required.
\end{proof}

Let $\mathfrak{P}\subseteq \mathfrak{T}$ be a collection of finite paths.
Then we define $\E(\mathfrak{P})$ to be the set of endpoints of $T\in \mathfrak{P}$.

\begin{proposition}\label{pr:paths}
Let $\mathcal{G}=(V,\mathcal{B},E)$ be an acyclic bounded degree Borel graph such that every vertex has even degree.
Then there is a sequence $\{\mathfrak{P}_n\}_{n\in \mathbb{N}}$ of Borel subsets of $\mathfrak{T}$ such that
\begin{enumerate}
	\item $E\subseteq \bigcup\mathfrak{P}_0$,
	\item $T\cap T'=\emptyset$ for any $T\not=T'\in \mathfrak{P}_n$ and every $n\in \mathbb{N}$,
	\item for every $n\in \mathbb{N}$ and $T\in \mathfrak{P}_n$ there is a unique $T'\in \mathfrak{P}_{n+1}$ such that $T\subseteq T'$,
	\item $\bigcap_{n\in \mathbb{N}} \E(\mathfrak{P}_n)=\emptyset$.
\end{enumerate}
\end{proposition}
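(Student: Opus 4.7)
\emph{Plan.} The proof proceeds in two conceptual steps: first, decompose $E$ into a Borel family of bi-infinite paths (``lines'') by pairing up the edges at each vertex; then, within each line, use the Marker Lemma to carve out a Borel nested family of cutting markers that shrink to $\emptyset$.

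For the line decomposition, fix a Borel linear order on $V$ and, at each vertex $v$, pair consecutive edges in $N(v)$ with respect to this order --- possible since $|N(v)|$ is even. This gives a Borel involution $\pi_v$ on $N(v)$. Form the auxiliary Borel graph $\mathcal{H}^{*}$ on $E$ with $(e,e')\in I_{\mathcal{H}^{*}}$ iff $e,e'$ share a vertex $v$ and $\pi_v(e)=e'$. Then $\mathcal{H}^{*}$ is $2$-regular, and acyclicity of $\mathcal{G}$ forbids non-backtracking closed walks in $\mathcal{G}$ (such a walk would close off a cycle), so $\mathcal{H}^{*}$ has no finite cycles. Being $2$-regular, its components are therefore bi-infinite paths, which we call \emph{lines}; these partition $E$, and each is a genuine path in $\mathcal{G}$ (no vertex repeats, by acyclicity).

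For the markers, the countable Borel equivalence relation $F$ on $E$ whose classes are the lines is aperiodic. By the Marker Lemma (\cite[Lemma~6.7]{KechrisMiller}), there is a decreasing Borel sequence $M_0\supseteq M_1\supseteq\cdots\subseteq E$ with $\bigcap_n M_n=\emptyset$ and each $M_n$ a complete section of $F$. A pigeonhole observation shows that each $M_n$ automatically meets every line $L$ in an \emph{infinite} set: if $M_n\cap L$ were finite, the decreasing sequence of finite sets $(M_m\cap L)_{m\ge n}$ would hit $\emptyset$ at some finite stage, contradicting the complete-section property of that $M_m$. Now define $\mathfrak{P}_n$ to consist, on each line $L$, of the singleton-paths $\{e\}$ for $e\in M_n\cap L$ together with the maximal non-marker subpaths of $L$ between consecutive markers; each such subpath is finite by the preceding observation, and $\mathfrak{P}_n\subseteq\mathfrak{T}$ is visibly Borel.

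Property (1) holds as $\mathfrak{P}_0$ already covers $E$; property (2) is by construction; property (3) uses $M_{n+1}\subseteq M_n$: a $\mathfrak{P}_n$-path $T$ sits in the unique $\mathfrak{P}_{n+1}$-path obtained by merging $T$ with its $\mathfrak{P}_n$-neighbours across any demoted markers in $M_n\setminus M_{n+1}$. For (4), every endpoint in $\E(\mathfrak{P}_n)$ is incident to some marker edge in $M_n$ (either because it is an endpoint of a singleton marker path, or because it is the transit vertex between a non-marker run and the adjacent marker). Hence if $v\in\bigcap_n \E(\mathfrak{P}_n)$, the finite set $N(v)$ together with pigeonhole and monotonicity of $\{M_n\}$ forces some fixed $e\in N(v)$ to lie in $M_n$ for arbitrarily large $n$, so $e\in\bigcap_n M_n=\emptyset$, a contradiction.

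\emph{Main obstacle.} The crucial non-obvious point is that the markers actually chop each line into \emph{finite} subpaths, which requires each $M_n$ to meet every line infinitely often. Pleasingly, this follows for free from the combination of complete section, monotonicity, and $\bigcap_n M_n=\emptyset$ via the pigeonhole observation above, so the standard Marker Lemma suffices without any additional sparsification argument.
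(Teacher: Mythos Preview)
Your line--decomposition step is sound: the local pairings give a $2$-regular acyclic Borel graph $\mathcal{H}^{*}$ on $E$ whose components are bi-infinite simple paths in $\mathcal{G}$, and this is a pleasant inversion of the paper's route (the paper grows finite paths from singleton edges using sparse sets $\{A_n\}$ of increasing sparsity, and only \emph{afterwards} pairs up the resulting one-ended rays at their endpoints). The gap is in the marker step. Your pigeonhole observation correctly shows that $M_n\cap L$ is infinite, but that is \emph{not} enough to make the non-marker runs finite: identifying a line $L$ with $\mathbb{Z}$, the set $M_n\cap L$ can perfectly well be bounded on one side (e.g.\ $\{0,1,2,\dots\}$), and the bare Marker Lemma gives no control on how $M_n$ is distributed inside a class. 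In that situation the run $\{\dots,-2,-1\}$ is infinite, so either $\mathfrak{P}_n\not\subseteq\mathfrak{T}$ or these edges are not covered by any $T\in\mathfrak{P}_n$; in the latter case property~(3) breaks as soon as their last neighbouring marker drops out of $M_{n+1}$. So the sentence ``each such subpath is finite by the preceding observation'' is exactly where the argument fails.

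The repair is to use the graph structure of $\mathcal{H}^{*}$, not just the equivalence relation. Recursively let $D_0=E$ and take $D_{n+1}\subseteq D_n$ to be a Borel maximal $2^{n+1}$-discrete (in the $\mathcal{H}^{*}$-metric) subset of $D_n$; such sets exist by \cite[Proposition~4.2]{KST}. Then each $D_n$ is syndetic (every edge lies within $\mathcal{H}^{*}$-distance $2^{n+1}$ of $D_n$), so all non-marker runs are genuinely finite. The intersection $T=\bigcap_n D_n$ now meets each line in at most one point (being $2^n$-discrete for every $n$); replace $D_n$ by $D_n\setminus T$, which preserves nesting and syndeticity (one point removed from a syndetic set at worst doubles one gap) and forces $\bigcap_n(D_n\setminus T)=\emptyset$. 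With these marker sets your verification of (1)--(4) goes through verbatim.
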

\begin{proof}
If $e,f\in E$, then we define $d(e,f)$ as the minimal size of a path $P\in \mathfrak{T}$ that connects $e$ and $f$.
For $T\in \mathfrak{T}$ we put $d(e,T)=\max\{d(e,f):f\in T\}$.
Because $\mathcal{G}$ is of bounded degree we find a sequence $\{A_n\}_{n\in \mathbb{N}}$ of Borel subsets of $E$ such that $|\{n\in \mathbb{N}:e\in A_n\}|=\infty$ for every $e\in E$ and
$$\mathfrak{s}(n)=\min\{d(e,f):e\not =f\in A_n\}\to \infty,$$
see~\cite[Proposition~4.6]{KST}.

As a first step we define inductively a collection $\{\mathcal{P}_n\}_{n\in \mathbb{N}}$ that satisfies (1)--(3) and a relaxation of (4).
Set $\mathcal{P}_0=E$.
Suppose that $\mathcal{P}_n$ satisfies (2) and (3) and let $P\in \mathcal{P}_n$ be such that $A_n\cap P\not=\emptyset$.
Choose in a Borel way a $\subseteq$-maximal path extension $\widetilde{P}$ of $P$ that consists of elements from $\mathcal{P}_n$, say
$$\widetilde{P}={R_1}^\frown  \dots ^\frown {R_l}^\frown {P} ^\frown {R_{l+1}}^\frown  \dots ^\frown {R_k}$$
where $R_i\in \mathcal{P}_n$, such that for every $i\le k$ there is $e\in P\cap A_n$ such that $d(e,R_i)<\frac{\mathfrak{s}(n)}{3}$.

Let $P,Q\in \mathcal{P}_n$ be such that $P\cap A_n\not=\emptyset\not=Q\cap A_n$ and $\widetilde{P}\cap \widetilde{Q}\not=\emptyset$.
We show that $P=Q$.
It follows from the inductive assumption (2) that there is $R\in \mathcal{P}_n$ such that $R\subseteq \widetilde{P},\widetilde{Q}$.
If $R=P=Q$, then we are done.
Suppose, e.g., that $R\not =P$, the case $R\not=Q$ is the same.
By the definition of $\widetilde{P}$ we find $e\in A_n\cap P$ such that $d(e,h)\le \frac{\mathfrak{s}(n)}{3}$ for every $h\in R$.
If $Q=R$, then we get a contradiction with the definition of $\mathfrak{s}(n)$ since $Q\cap A_n\not=\emptyset$.
If $Q\not= R$, then we find $f\in A_n\cap Q$ such that $d(f,h)\le \frac{\mathfrak{s}(n)}{3}$ for every $h\in R$.
Consequently, $d(e,f)\le d(e,h)+d(f,h)\le \frac{2\mathfrak{s}(n)}{3}$ for any $h\in R$ and we must have $e=f$ by the definition of $\mathfrak{s}(n)$.
By the inductive assumption (2) we conclude that $P=Q$.

Let
$$\mathcal{Q}_{n+1}=\{\widetilde{P}:P\in \mathcal{P}_n \ \wedge \ A_n\cap P\not=\emptyset\}$$
and define
$$\mathcal{P}_{n+1}=\mathcal{Q}_{n+1}\cup \{P\in \mathcal{P}_n:\forall Q\in \mathcal{Q}_{n+1} \ P\cap Q=\emptyset\}.$$
It follows from the previous argument that $\mathcal{P}_{n+1}$ satisfies (2) and (3).

The construction guarantees the following relaxation of (4).
Let $e\in E$ and $n\in \mathbb{N}$.
It follows from (1)--(3) that there is a unique $P_{e,n}\in \mathcal{P}_n$ such  that $e\in P_{e,n}$.
Note that if $f\in P_{e,n}$, then $P_{e,n}=P_{f,n}$.
Define $P_e=\bigcup_{n\in \mathbb{N}} P_{e,n}$.
Then $P_e$ is a path by (1)--(3) and we show that it is infinite.
Suppose for a contradiction that $P_e$ is finite, i.e., $|P_e|=m\in \mathbb{N}$.
Let $v\in V$ be an end point of $P_e$.
By the assumption that the degree of $v$ is even we find $f\in N(v)\setminus P_e$ such that $v$ is an endpoint of $P_f$.
Since $\mathcal{G}$ is acyclic we have $P_f\not=P_e$.
Let $n\in \mathbb{N}$ be such that $f\in A_n$, $3m<\mathfrak{s}(n)$ and $P_e=P_{e,n}$.
By the definition we have that $P_{f,n+1}\in \mathcal{P}_{n+1}$ is a $\subseteq$-maximal extension of $P_{f,n}\in \mathcal{P}_n$ that satisfies the condition above.
However, $Q={P_{f,n+1}}^\frown P_{e,n}$ is an extension of $\mathcal{P}_{f,n+1}$ and also satisfies the condition because $d(f,P_{e,n})\le m<\frac{\mathfrak{s}(n)}{3}$.
This shows that $P_e$ is infinite for every $e\in E$.

Now we pair the one ended rays of $\{\mathcal{P}_n\}_{n\in \mathbb{N}}$ to obtain $\{\mathfrak{P}_n\}_{n\in \mathbb{N}}$ that satisfies (1)--(4).
Let $v\in V$ and $E(v)\subseteq N(v)$ be the set of edges $e\in N(v)$ such that $v$ is an endpoint of $P_e$.
Since the degree of $v$ is even it follows that $|E(v)|$ is even for every $v\in V$.
Note that $E(v)\cap E(w)=\emptyset$ for any $v\not=w \in V$ because $P_e$ has at most one endpoint for every $e\in E$.
Let $I:E\to E$ be a Borel involution such that $I(e)\not =e$ if and only if there is $v\in V$ such that $e\in E(v)$ and in that case $I(e)\in E(v)$, i.e., $I$ is a pairing when restricted to any $E(v)$.

Let $M=\bigcup_{v\in V} E(v)$ and define
$$\mathfrak{P}_n=\{P_{e,n}\cup P_{I(e),n}:e\in M\}\cup \{P\in \mathcal{P}_n:P\cap M=\emptyset\}.$$
Property (1) is trivially satisfied.
Note that $|M\cap P_{e,n}|\le 1$ for every $e\in E$ and $n\in \mathbb{N}$ since $P_e$ is infinite.
This implies that if $P,Q\in \mathfrak{P}_n$ and $P\cap Q\not=\emptyset$, then $P=Q$.
Consequently we have (2).
Similarly we get that $P_{e,n}\cup P_{I(e),n}\subseteq P_{e,n+1}\cup P_{I(e),n+1}$ for every $e\in M$ and that gives (3).

Let $v\in \bigcap_{n\in \mathbb{N}}E(\mathfrak{P}_n)$.
Then there is a sequence $T_n\in \mathfrak{P}_n$ such that $v$ is an endpoint of $T_n$.
By (3) we may assume that $T_n\subseteq T_{n+1}$ and there is $e\in N(v)$ such that $e\in T_n$ for every $n\in \mathbb{N}$.
Note that $P_{e,n}\subseteq T_n$ by the definition of $\mathfrak{P}_n$.
Consequently, $v$ is an endpoint of $P_e$.
We have $T_0=\{e,I(e)\}$ by the definition of $\mathfrak{P}_0$.
That is a contradiction because $v$ is not an endpoint of $T_0$.
This shows (4) and finishes the proof.
\end{proof}

\begin{theorem}[Theorem~\ref{th:main}~(II)]\label{th:orientation}
Let $\mathcal{G}=(V,\mathcal{B},E)$ be a bounded degree Borel graph such that every vertex has an even degree.
Then there is a sequence $\{S_n\}_{n\in\mathbb{N}}$ of orientations of $\mathcal{G}$ such that
$$\mu\left(\left\{v\in \Corr(S_n):N_0(v)\subseteq \Corr(S_n)\right\}\right)\to 1$$
for every Borel probability measure $\mu$ on $(V,\mathcal{B})$.
In particular, $\mathcal{G}$ admits approximate balanced orientation.
\end{theorem}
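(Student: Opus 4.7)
The plan is to combine Propositions~\ref{pr:no cycles} and~\ref{pr:paths}. For the nontrivial direction of the iff, first apply Proposition~\ref{pr:no cycles} to peel off a Borel collection $\mathfrak{M}$ of pairwise disjoint cycles so that $\mathcal{H}=(V,\mathcal{B},E\setminus \bigcup_{C\in \mathfrak{M}}C)$ is acyclic and every vertex still has even degree bounded by $\Delta(\mathcal{G})$; then apply Proposition~\ref{pr:paths} to $\mathcal{H}$ to obtain the nested path-decompositions $\{\mathfrak{P}_n\}_{n\in \mathbb{N}}$.

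Fix a Borel linear ordering $\prec$ of $V$. Orient each cycle $C\in \mathfrak{M}$ cyclically in a Borel way using $\prec$, and orient each path $T\in \mathfrak{P}_n$ from its $\prec$-smaller endpoint to its $\prec$-larger endpoint. The union of these Borel orientations is an orientation $S_n$ of $\mathcal{G}$. I claim $V\setminus \Corr(S_n)\subseteq \E(\mathfrak{P}_n)$. Fix $v\notin \E(\mathfrak{P}_n)$ and partition the edges of $\mathcal{G}$ at $v$ into those lying in some $C\in \mathfrak{M}$ and those lying in $\mathcal{H}$. Each cycle $C\in \mathfrak{M}$ through $v$ contributes exactly one in-edge and one out-edge at $v$ by its cyclic orientation. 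By properties~(1) and~(3), every edge of $\mathcal{H}$ at $v$ belongs to some $T\in \mathfrak{P}_n$, unique by~(2); since $v\notin \E(\mathfrak{P}_n)$, each such $T$ passes through $v$ as an interior vertex, so the linear orientation of $T$ contributes exactly one in-edge and one out-edge at $v$. Summing, $|\out_{S_n}(v)|=|\inn_{S_n}(v)|$, i.e.\ $v\in \Corr(S_n)$.

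Finally, set $B_n=\{v\in V:(\{v\}\cup N_0(v))\cap \E(\mathfrak{P}_n)\neq \emptyset\}$. First, $\{\E(\mathfrak{P}_n)\}$ is decreasing: if $v$ is an endpoint of $T'\in \mathfrak{P}_{n+1}$ witnessed by $e\in T'\cap N(v)$ and $T\in \mathfrak{P}_n$ contains $e$, then the unique member of $\mathfrak{P}_{n+1}$ containing $T$ equals $T'$ by~(2) and~(3), so $T\subseteq T'$, and since $v$ has degree $1$ in $T'$ it also has degree $1$ in $T$, giving $v\in \E(\mathfrak{P}_n)$. Hence $\{B_n\}$ is also decreasing, and the finiteness of each $N_0(v)$ together with property~(4) forces $\bigcap_n B_n=\emptyset$; continuity of measure then yields $\mu(B_n)\to 0$ for every Borel probability measure $\mu$. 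By the previous paragraph, $V\setminus B_n\subseteq \{v\in \Corr(S_n):N_0(v)\subseteq \Corr(S_n)\}$, which is exactly the stated convergence. The converse direction of Theorem~\ref{th:main}~(II) is immediate: if $v_0$ has odd degree, then $v_0\notin \Corr(S)$ for every orientation $S$, and the Dirac measure at $v_0$ rules out any approximate balanced orientation. The only subtle point is the edge-pairing at non-endpoint vertices, which falls cleanly out of the edge-disjointness in~(2) together with the evenness of the degree in $\mathcal{H}$; the remainder of the combinatorial work is already packaged into Propositions~\ref{pr:no cycles} and~\ref{pr:paths}.
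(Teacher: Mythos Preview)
Your proof is correct and follows essentially the same route as the paper: apply Proposition~\ref{pr:no cycles}, then Proposition~\ref{pr:paths}, orient the resulting cycles and paths consistently, and observe that failure of $\Corr(S_n)$ is confined to $\E(\mathfrak{P}_n)$, whose nested intersection is empty. You supply several details the paper leaves implicit---the explicit Borel orientation via a linear order, the verification that $\E(\mathfrak{P}_n)$ is decreasing, and the pigeonhole step converting $\bigcap_n\E(\mathfrak{P}_n)=\emptyset$ into $\bigcap_n B_n=\emptyset$ via finiteness of $N_0(v)$---and you also address the converse direction of Theorem~\ref{th:main}~(II), which the paper omits.
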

\begin{proof}
Proposition~\ref{pr:no cycles} produces a maximal Borel set $\mathfrak{M}$ of pairwise disjoint cycles and Proposition~\ref{pr:paths} applied to the acyclic Borel graph $\mathcal{H}=(V,\mathcal{B},E\setminus \bigcup_{C\in \mathfrak{M}} C)$ produces a sequence $\{\mathfrak{P}_n\}_{n\in\mathbb{N}}$.

Note that since $\mathfrak{M}\cup \mathfrak{P}_n$ is a collection of finite paths and cycles that cover $E$ it is easy to produce an orientation $S_n$ of $\mathcal{G}$ such that $v\not\in \Corr(S_n)$ only if $v$ is an endpoint of some $T\in \mathfrak{P}_n$.
This implies that
\begin{equation*}
\begin{split}
 & \ \bigcap_{n\in \mathbb{N}}V\setminus \{v\in \Corr(S_n):N_0(v)\subseteq \Corr(S_n)\} \\
= & \ \bigcap_{n\in \mathbb{N}} \{v\in V:v\in \E(\mathfrak{P}_n) \ \vee \ N_0(v)\cap \E(\mathfrak{P}_n) \not=\emptyset \}=\emptyset
\end{split}
\end{equation*}
and the proof is finished.
\end{proof}

\section{Approximate Schreier decoration}

Before proving the remaining part of Theorem~\ref{th:main} we remind the
reader how to use (I) and (II) in the finite setting. Suppose that $G=(V,E)$
is a finite $2\Delta$-regular graph. Then by (II) we find a balanced
orientation $S\subseteq E$ of $G$. Consider now a bipartite graph
$H=(V_0\sqcup V_1,F)$ where the bipartition is formed by two disjoint copies
of $V$ and there is an edge $(v,w)\in F$, where $v\in V_0$ and $w\in V_1$, if
and only if there is an oriented edge pointing from $v$ to $w$ in $S$. The
fact that $S$ is balanced implies that $H$ is $\Delta$-regular. By (I), i.e.,
Kőnig's Theorem, we find a proper coloring $c':F\to \Delta$. This induces a
coloring $c:E\to \Delta$ such that $(S,c)$ is a Schreier decoration of $G$.

\begin{theorem}[Theorem~\ref{th:main}~(III)]\label{th:decoration}
Let $\mathcal{G}=(V,\mathcal{B},E)$ be a $2\Delta$-regular Borel graph.
Then $\mathcal{G}$ admits approximate Schreier decoration.
\end{theorem}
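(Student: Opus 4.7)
The plan is to carry out, in the measurable setting, the finite-graph reduction sketched in the paragraph immediately preceding the theorem, exploiting the strengthened form of (II) noted just after Theorem~\ref{th:main}. Fix a Borel probability measure $\mu$ on $V$ and $\epsilon > 0$. I would first invoke Theorem~\ref{th:orientation} to produce an orientation $S$ of $\mathcal{G}$ with $\mu(W^*) > 1 - \epsilon/4$, where $W = \Corr(S)$ and $W^* = \{v \in W : N_0(v) \subseteq W\}$; note that $v \in W$ forces $|\out_S(v)| = |\inn_S(v)| = \Delta$.

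Next I would build a bipartite Borel graph $\mathcal{H}$ on the standard Borel space $V \times \{0, 1\}$ by declaring $\{(v, 0), (w, 1)\}$ an edge exactly when $v, w \in W$ and $(v, w) \in S$. Vertices $(v, i)$ with $v \notin W$ are isolated, while for $v \in W$ the degrees of $(v, 0)$ and $(v, 1)$ in $\mathcal{H}$ are bounded by $|\out_S(v)| = \Delta$ and $|\inn_S(v)| = \Delta$ respectively, so $\Delta(\mathcal{H}) \le \Delta$. Equip $V \times \{0, 1\}$ with the Borel probability measure $\mu_H(A) = \tfrac{1}{2}(\mu(A_0) + \mu(A_1))$, where $A_i = \{v \in V : (v, i) \in A\}$, and apply Theorem~\ref{th:main}~(I) to the bipartite graph $\mathcal{H}$ with tolerance $\epsilon/4$ to obtain an edge coloring $c' \colon F \to \Delta$ (padding with unused colors if $\Delta(\mathcal{H}) < \Delta$) satisfying $\mu_H(\Corr(c')) > 1 - \epsilon/4$. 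Pull $c'$ back to an edge coloring $c \colon E \to \Delta$ by setting $c(\{v, w\}) = c'(\{(v, 0), (w, 1)\})$ when $v, w \in W$ and $(v, w) \in S$, and assigning a fixed default color otherwise; the pair $(S, c)$ is then a partial Schreier decoration of $\mathcal{G}$.

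To verify $\mu(\Corr(S, c)) > 1 - \epsilon$, observe that if $v \in W^*$ and both $(v, 0), (v, 1) \in \Corr(c')$, then every edge at $v$ has both endpoints in $W$, hence inherits its color from $c'$; properness of $c'$ at $(v, 0)$ and $(v, 1)$ then translates into injectivity of $c$ on $\out_S(v)$ and $\inn_S(v)$, giving $v \in \Corr(S, c)$. Because $\mu_H$ weights each copy of $V$ by $1/2$, one has $\mu(\{v \in V : (v, i) \notin \Corr(c')\}) = 2 \mu_H((V \times \{i\}) \setminus \Corr(c'))$, so the union bound
$$\mu(V \setminus \Corr(S, c)) \le \mu(V \setminus W^*) + 2 \mu_H((V \times \{0, 1\}) \setminus \Corr(c')) < \tfrac{\epsilon}{4} + 2 \cdot \tfrac{\epsilon}{4} < \epsilon$$
completes the argument. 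The one delicate point in this plan is forcing $\Delta(\mathcal{H}) \le \Delta$: one must restrict $\mathcal{H}$ to edges between $W$-vertices (since a merely balanced orientation can still have out-degree as high as $2\Delta$ at bad vertices), which is precisely why the $N_0(v) \subseteq \Corr(S)$ strengthening of Theorem~\ref{th:orientation} is used in place of the weaker $\mu(\Corr(S_n)) \to 1$.
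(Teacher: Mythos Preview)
Your proposal is correct and follows essentially the same route as the paper: apply Theorem~\ref{th:orientation} to get an orientation good on $W^*=\{v\in\Corr(S):N_0(v)\subseteq\Corr(S)\}$, form the auxiliary bipartite graph on two copies of the vertex set with edges coming from $S$ restricted to $W$, apply Theorem~\ref{th:main}~(I) there, and pull the coloring back. The only cosmetic differences are that the paper builds $\mathcal H$ on two copies of $\Corr(S)$ (normalizing the measure by $\mu(\Corr(S))$) whereas you keep all of $V\times\{0,1\}$ with isolated vertices, and the $\epsilon$-bookkeeping uses slightly different splits; neither affects the argument.
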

\begin{proof}
Let $\mu$ be a Borel probability measure on $(V,\mathcal{B})$ and $\epsilon>0$.
Use Theorem~\ref{th:orientation} to find an orientation $S$ of $\mathcal{G}$ such that
$$\mu(\{v\in \Corr(S):N_0(v)\subseteq \Corr(S)\})>1-\frac{\epsilon}{2}.$$
Define a bipartite Borel graph $\mathcal{H}=(C^0\sqcup C^1,\mathcal{D},H)$ where $C^0,C^1$ are disjoint copies of $\Corr(S)$, $\mathcal{D}$ is the corresponding $\sigma$-algebra and $(v^0,w^1),(w^1,v^0)\in H$ if and only if  $(v,w)\in S$ where $v^0\in C^0$, $w^1\in C^1$ are the copies of $v,w\in \Corr(S)$.
It follows from the definition of $\Corr(S)$ that the maximum degree of $\mathcal{H}$ is bounded by $\Delta$.
Define a Borel probability measure $\nu$ on $C^0\sqcup C^1$ as
$$\nu(A\sqcup B)=\frac{\mu(A)+\mu(B)}{2\mu(\Corr(S))}$$
whenever $A\subseteq C^0$ and $B\subseteq C^1$ are Borel sets.
Theorem~\ref{th:main}~(I) gives an edge coloring $c':H\to \Delta$ such that $\nu(\Corr(c'))>1-\frac{\epsilon}{4\mu(\Corr(S))}$.
Let $c:E\to \Delta$ be an edge coloring that extends $c'$, i.e., $c(v,w)=c(w,v)=c'(v^0,w^1)$ whenever $(v,w)\in S$ and $v,w\in \Corr(S)$.

Write $X=\{v\in \Corr(S):N_0(v)\subseteq \Corr(S)\}$ and $Y_i=\{v\in V:v^i\in \Corr(c')\cap C^i\}$ where $i<2$.
Let $v \in X\cap Y_0\cap Y_1$.
Then it is easy to see that $v\in \Corr(S,c)$.
We have
$$\mu(V\setminus X\cap Y_0\cap Y_1)\le \mu(V\setminus X)+\mu(V\setminus Y_0)+\mu(V\setminus Y_1)<\epsilon$$
because $\mu(V\setminus Y_0)+\mu(V\setminus Y_1)\le 2\mu(\Corr(S))\nu(C^0\sqcup C^1\setminus \Corr(c'))<\frac{\epsilon}{2}$.
This finishes the proof.
\end{proof}

\section{Remarks}\label{sec:5}

The ultraproduct technique for graphings, see~\cite{Elek} or~\cite{KecConleyT-D}, implies that we can find a locally-globally equivalent graphing that is an extension of the original one and satisfies fully (not just approximately) the corresponding conditions in Theorem~\ref{th:main} or Corollary~\ref{cor:matchings}.
We refer the reader to~\cite[Chapter~19]{Lovasz} for the corresponding definitions.

For example, if $(\mathcal{G},\mu)$ is a graphing where $\mathcal{G}=(V,\mathcal{B},E)$ is a $2\Delta$-regular Borel graph, then there is a $2\Delta$-regular Borel graph $\mathcal{G}'=(V',\mathcal{B}',E')$, a $\mathcal{G}'$-invariant Borel probability measure $\mu'$ and a Borel map $\varphi:V'\to V$ such that $\varphi$ is a local isomorphisms, $\varphi^*\mu'=\mu$, $(\mathcal{G},\mu)$, $(\mathcal{G}',\mu')$ are locally-globally equivalent and $\mathcal{G}$ admits a Schreier decoration, i.e., it is induced by a pmp action of the free group $F_{\Delta}$.
This extends \cite[Corollary~4]{Laszlo} and implies \cite[Theorem~1]{Laszlo}.

Similar statement hold in the case of quasi-invariant probability measures when the notion of local-global equivalence is extended appropriately.

\end{document}